\theoremstyle{plain}
\newtheorem{teo}{Theorem}
\newtheorem{pro}[teo]{Proposition}
\newtheorem{cor}[teo]{Corollary}
\theoremstyle{definition}
\newtheorem{que}{Question}
\theoremstyle{remark}
\newtheorem{rem}[teo]{Remark}
\DeclareMathOperator{\Irr}{Irr}
\DeclareMathOperator{\SL}{SL}
\DeclareMathOperator{\St}{St}
\DeclareMathOperator{\red}{red}
\DeclareMathOperator{\ind}{ind}
\newcommand{\F}{\mathbb{F}}
\newcommand{\N}{\mathbb{N}}
\newcommand{\Z}{\mathbb{Z}}
\newcommand{\Q}{\mathbb{Q}}
\newcommand{\C}{\mathbb{C}}
\renewcommand{\theta}{\vartheta}
\subjclass[2010]{20E18 (11M41, 20C15, 22E50)}
\keywords{Representation zeta function, zeros, Brauer's Problem}
\thanks{A.~Jaikin-Zapirain and J.~Gonz\'alez-S\'anchez were supported
  by grant MTM2011-28229-C02-01 from the Spanish Ministry of Economy
  and Competitivity.  J.~Gonz\'alez-S\'anchez also acknowledges support
  through the Ram\'on y Cajal Programme of the Spanish Ministry of
  Science and Innovation.  B.~Klopsch is grateful for financial
  support which he received as a visitor at the Universidad Aut\'onoma
  de Madrid}
\begin{document}

\title[Representation zeta function vanishes at $-2$]{The
  representation zeta function of a FAb compact $p$-adic Lie group
  vanishes at $-2$}

\author{Jon Gonz\'alez-S\'anchez} \address{
  Departamento de Matem\'aticas \\
  Facultad de Ciencia y Tecnolog\'ia \\
  Universidad del Pa\'is Vasco- Euskal Herriko Unibertsitatea\\
  Apartado 644 48080 Bilbao, Spain} \email{jon.gonzalez@ehu.es}

\author{Andrei Jaikin-Zapirain} \address{
  Departamento de Matem\'aticas \\
  Facultad de Ciencias, m\'odulo 17\\
  Universidad Aut\'onoma de Madrid\\
  Ciudad Universitaria de Cantoblanco \\
  28049-Madrid, Spain} \email{andrei.jaikin@uam.es}

\author{Benjamin Klopsch} 
\address{Institut f\"ur Algebra und Geometrie\\ Mathematische 
  Fakult\"at\\ Otto-von-Guericke-Universit\"at Magdeburg\\ 39106
  Magdeburg\\ Germany}
\email{Benjamin.Klopsch@ovgu.de}


\maketitle

\begin{abstract}
  Let $G$ by compact $p$-adic Lie group and suppose that $G$ is FAb,
  i.e., that $H/[H,H]$ is finite for every open subgroup $H$ of~$G$.
  The representation zeta function $\zeta_G(s) = \sum_{\chi \in
    \Irr(G)} \chi(1)^{-s}$ encodes the distribution of continuous
  irreducible complex characters of~$G$.  For $p \geq 3$ it is known
  that $\zeta_G(s)$ defines a meromorphic function on~$\C$.

  Wedderburn's structure theorem for semisimple algebras implies that
  $\zeta_G(-2) = \lvert G \rvert$ for finite~$G$.  We complement this
  classic result by proving that $\zeta_G(-2) = 0$ for infinite~$G$,
  assuming $p \geq 3$.
\end{abstract}

\section{Introduction}
Let $G$ be a finitely generated profinite group, and let $\Irr(G)$
denote the collection of all continuous irreducible complex characters
of~$G$.  We observe that each $\chi \in \Irr(G)$ has finite degree and
for every positive integer $n \in \N$ we put $r_n(G) = \lvert \{ \chi
\in \Irr(G) \mid \chi(1) = n \} \rvert$.  From Jordan's theorem on
finite linear groups in characteristic~$0$
(see~\cite[Theorem~9.2]{We73}) one deduces that $r_n(G)$ is finite for
all $n \in \N$ if and only if $G$ is FAb, i.e., if $H/[H,H]$ is finite
for every open subgroup $H$ of~$G$.

Suppose that $G$ is FAb.  Then the arithmetic sequence $r_n(G)$, $n
\in \N$, is encoded in the Dirichlet generating function
\begin{equation*}
  \zeta_G(s) = \sum_{n=1}^\infty r_n(G) n^{-s} = \sum_{\chi \in
    \Irr(G)} \chi(1)^{-s}
\end{equation*}
which is known as the \emph{representation zeta function} of~$G$.  If
the representation growth of $G$ is polynomially bounded, i.e., if
$\sum_{n=1}^N r_n(G) = O(N^d)$ for some constant $d$, then
$\zeta_G(s)$ defines an analytic function on a non-empty right
half-plane of~$\C$.  Under favourable circumstances, this function
admits a meromorphic continuation, possibly to the entire complex
plane~$\C$.

In recent years representation growth and representation zeta
functions have been investigated for various kinds of groups,
including compact $p$-adic Lie groups; for instance,
see~\cite{JZ06,LaLu08,AKOV12a,AKOV13} or the short introductory
survey~\cite{Kl13}.  An intriguing, but mostly unexplored aspect is
the significance of special values of representation zeta functions.
In particular, one may be curious about the locations of zeros and
poles.  While there is some theoretical understanding of the latter
(see~\cite[Theorem~B]{AKOV13}, and also compare~\cite{dS94} for the
pole spectra of related zeta functions), almost nothing is known about
the former.

In the present paper we establish that $\zeta_G(s)$ vanishes at $s=-2$
for every member $G$ of a certain class of profinite groups, including
all infinite FAb compact $p$-adic Lie groups for $p \geq 3$.  Indeed,
let $G$ be a finitely generated profinite group which is FAb and
virtually pro-$p$ for some prime~$p$.  We say that $G$ has
\textit{rational representation zeta function} (with respect to $p$),
$\mathrm{r.r.z.f.}_{(p)}$ for short, if there exist finitely many positive
integers $m_1, \ldots, m_k$ and rational functions $R_1, \ldots, R_k
\in \Q(X)$ such that
\begin{equation} \label{equ:formula} \zeta_G(s) = \sum_{i=1}^k
  m_i^{-s} R_i(p^{-s}).
\end{equation}
In~\cite{JZ06}, Jaikin-Zapirain proved that, for $p \geq 3$, every FAb
compact $p$-adic Lie group has~$\mathrm{r.r.z.f.}_{(p)}$.  It is
conjectured that the result extends to $2$-adic Lie groups; presently,
it is known that every uniformly powerful pro-$2$ group
has~$\mathrm{r.r.z.f.}_{(2)}$.

There is only a small number of FAb compact $p$-adic Lie groups $G$
for which the representation zeta function $\zeta_G(s)$ has been
computed explicitly; see~\cite{JZ06,AKOV12a,AKOV13}.  By inspection of
the formula given in~\cite[Theorem~7.5]{JZ06}, Motoaki Kurokawa and
Nobushinge Kurokawa noticed that the representation zeta function of
the $p$-adic Lie group $\SL_2(\Z_p)$ has zeros at~$s=-1$ and $s=-2$.
The purpose of the present paper is to explain the zero at $s=-2$
which reflects a more general phenomenon.  

\begin{teo} \label{thm:main} Let $G$ be a FAb profinite group which is
  infinite and virtually a pro\nobreakdash-$p$ group.  If $G$ has
  rational representation zeta function with respect to~$p$ then
  $\zeta_G(-2) = 0$.
\end{teo}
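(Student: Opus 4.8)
The plan is to express $\zeta_G$ as a limit of the representation zeta functions of its finite quotients and to exploit the Wedderburn identity $\zeta_{G/N}(-2)=\lvert G/N\rvert$ recalled in the abstract. Fix a descending chain $G=G_0\supseteq G_1\supseteq G_2\supseteq\cdots$ of open normal subgroups of $G$ with $\bigcap_{j}G_j=1$, chosen compatibly with the $p$-adic structure underlying Jaikin-Zapirain's theorem that $G$ has $\mathrm{r.r.z.f.}_{(p)}$: concretely one takes, for $j$ large, the congruence filtration of an open normal uniform pro-$p$ subgroup $P\trianglelefteq G$, together with finitely many extra terms at the top to reach $G$. Since $G$ is infinite we may arrange that $[G_j:G_{j+1}]=q$ is constant and equal to $p^{\dim P}>1$ for all $j\geq j_0$. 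Setting
\begin{equation*}
  \delta_j(s)=\zeta_{G/G_{j+1}}(s)-\zeta_{G/G_j}(s)\qquad(j\geq 0),
\end{equation*}
a finite Dirichlet polynomial gathering those $\chi\in\Irr(G)$ with $G_{j+1}\leq\ker\chi$ but $G_j\not\leq\ker\chi$, we have $\zeta_G(s)=1+\sum_{j\geq 0}\delta_j(s)$ for $\operatorname{Re}(s)$ sufficiently large, because every continuous irreducible character of $G$ has open kernel and hence factors through some $G/G_j$.

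The decisive structural input — to be extracted from the Kirillov--Clifford machinery behind \eqref{equ:formula} rather than from \eqref{equ:formula} itself — is that the bivariate series
\begin{equation*}
  \Phi(s,T)=1+\sum_{j\geq 0}\delta_j(s)\,T^{\,j}
\end{equation*}
is a rational function of $p^{-s}$ and $T$; equivalently, for each fixed $s$ the sequence $j\mapsto\zeta_{G/G_j}(s)$ satisfies a linear recurrence over $\Q(p^{-s})$. This amounts to the assertion that truncating $\zeta_G$ at congruence level $j$ corresponds to truncating a geometric-type sum in the auxiliary variable $T$, which is visible in the $p$-adic integral representation of $\zeta_G$. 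Granting it, I would specialise at $s=-2$: the Wedderburn identity gives $\delta_j(-2)=\lvert G/G_{j+1}\rvert-\lvert G/G_j\rvert$, which for $j\geq j_0$ equals $C(q-1)q^{\,j}$ with $C=\lvert G/G_{j_0}\rvert\,q^{-j_0}$; hence
\begin{equation*}
  \Phi(-2,T)=1+\sum_{j=0}^{j_0-1}\bigl(\lvert G/G_{j+1}\rvert-\lvert G/G_j\rvert\bigr)T^{\,j}+\frac{C(q-1)(qT)^{j_0}}{1-qT}
\end{equation*}
is a rational function of $T$ whose only pole is the simple pole at $T=1/q\neq 1$.

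It remains to harvest the conclusion. Since $\Phi(s,1)=\zeta_G(s)$ for $\operatorname{Re}(s)$ large and $\Phi(-2,T)$ is regular at $T=1$, the point $(p^{-s},T)=(p^{2},1)$ avoids the polar locus of the two-variable rational function $\Phi$; therefore $\zeta_G$ is holomorphic at $s=-2$ and its value may be computed by evaluating $\Phi$ at $(p^{2},1)$ in either order, so $\zeta_G(-2)=\Phi(-2,1)$. Putting $T=1$ in the last display, the finite sum telescopes to $\lvert G/G_{j_0}\rvert-\lvert G/G_0\rvert=\lvert G/G_{j_0}\rvert-1$, the fractional term equals $C(q-1)q^{j_0}/(1-q)=-\lvert G/G_{j_0}\rvert$, and adding the leading $1$ yields $\zeta_G(-2)=1+(\lvert G/G_{j_0}\rvert-1)-\lvert G/G_{j_0}\rvert=0$, as desired. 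The main obstacle is the rationality of $\Phi$ in $p^{-s}$ and $T$ (or at least its meromorphy and regularity near $(p^{2},1)$): this cannot be read off from \eqref{equ:formula} and genuinely requires revisiting the orbit-method and Clifford-theory computation of \cite{JZ06}, organised uniformly over congruence quotients; once that is available, everything else is the soft telescoping argument above, and the passage from pro\nobreakdash-$p$ to virtually pro\nobreakdash-$p$ only affects the harmless polynomial part of $\Phi(-2,T)$.
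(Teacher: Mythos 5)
Your argument has a genuine gap, and it is the one you flag yourself: the ``decisive structural input'' that the bivariate series $\Phi(s,T)=1+\sum_j\delta_j(s)T^j$ is rational in $p^{-s}$ and $T$ (equivalently, that $j\mapsto\zeta_{G/G_j}(s)$ satisfies a linear recurrence over $\Q(p^{-s})$) is nowhere proved and is not available under the hypotheses of Theorem~\ref{thm:main}. The theorem assumes only that $G$ is FAb, infinite, virtually pro-$p$, and satisfies the one-variable identity \eqref{equ:formula}; $G$ is \emph{not} assumed to be $p$-adic analytic, so there is no Kirillov--Clifford or $p$-adic integral machinery ``behind \eqref{equ:formula}'' to revisit, no open normal uniform subgroup with a congruence filtration of constant index $q=p^{\dim P}$, and no reason for the recurrence you need. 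Even in the $p$-adic analytic setting of Corollary~\ref{cor:main}, two-variable rationality of $\Phi$ is not in \cite{JZ06} and would require a separate argument. Moreover, even granting it, the harvesting step is incomplete: regularity of the specialization $\Phi(-2,T)$ at $T=1$ does not show that $(p^{-s},T)=(p^2,1)$ avoids the indeterminacy locus of $\Phi$, so it does not license computing ``in either order''. For instance $\Phi(X,T)=(T-1)\big/\bigl((T-1)+(X-p^2)\bigr)$ restricts to the constant $1$ on the line $X=p^2$ but to the constant $0$ on the line $T=1$ away from $X=p^2$: the two iterated specializations disagree. You would need the denominator of $\Phi$ not to vanish at $(p^2,1)$, which your computation does not provide; without that, your evaluation of $\Phi(-2,1)$ is a regularization in the auxiliary variable $T$ that has not been tied to the meromorphic continuation of $\zeta_G(s)$ in $s$.

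The paper closes exactly this link, with no extra structural input, by reading \eqref{equ:formula} $p$-adically: the formula forces all but finitely many $\chi\in\Irr(G)$ to have degree divisible by $p^j$, so $\sum_{\chi}\chi(1)^2$ converges in $\Z_p$, and comparison of Dirichlet coefficients on both sides of \eqref{equ:formula} shows this $p$-adic sum equals $\sum_i m_i^2R_i(p^2)=\zeta_G(-2)$, the value of the continuation. Splitting the sum at the open normal subgroup $N_j=\bigcap\{\ker\chi : p^j\nmid\chi(1)\}$ gives $\zeta_G(-2)=\lvert G:N_j\rvert+p^{2j}a_j$ with $a_j\in\Z_p$ --- this is where your Wedderburn observation on finite quotients genuinely enters --- and letting $j\to\infty$ kills both terms $p$-adically because $G$ is infinite and virtually pro-$p$. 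So the finite-quotient/Wedderburn idea is sound and shared with the paper, but your route from it to $\zeta_G(-2)$ rests on an unproved (and, in the stated generality, unavailable) rationality statement plus an unjustified exchange of specializations; the $p$-adic interpolation of \eqref{equ:formula} is the missing ingredient that makes the telescoping rigorous.
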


As indicated, using~\cite[Theorem~1.1]{JZ06} we derive the following
corollary.

\begin{cor} \label{cor:main} Let $G$ be a FAb compact $p$-adic Lie
  group and suppose that $p \geq 3$.  If $G$ is infinite then
  $\zeta_G(-2) = 0$.
\end{cor}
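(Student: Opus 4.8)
The plan is to exploit the rationality hypothesis together with the known analytic behaviour of $\zeta_G(s)$ at $s=-2$ coming from the finite quotients of $G$. The starting point is the classical identity $\zeta_Q(-2) = \lvert Q \rvert$ for every finite group $Q$, which follows from Wedderburn's theorem since $\sum_{\chi \in \Irr(Q)} \chi(1)^2 = \lvert Q \rvert$. Writing $G$ as an inverse limit of its finite continuous quotients $G/N$, each character of $G/N$ inflates to a character of $G$, so the partial sums of $\zeta_G(s)$ over characters with kernel containing $N$ equal $\zeta_{G/N}(s)$; evaluated formally at $s=-2$ these give $\lvert G/N \rvert \to \infty$. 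The task is to reconcile this divergence of the ``finite-level'' data with the claim that the globally defined meromorphic function $\zeta_G$ takes the value $0$ at $-2$.

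First I would use~\eqref{equ:formula} to write $\zeta_G(s) = \sum_{i=1}^k m_i^{-s} R_i(p^{-s})$ and set $t = p^{-s}$, so that $\zeta_G(-2) = \sum_i m_i^2 R_i(p^2)$, provided none of the $R_i$ has a pole at $t = p^2$; a preliminary step is therefore to argue that $s=-2$ is not a pole of $\zeta_G$ (for instance invoking the pole-spectrum results of~\cite{AKOV13}, or arguing directly that the value is finite). Next I would bring in the finite quotients: since $G$ is virtually pro-$p$, it has a chain of open normal subgroups $G = G_0 \supseteq G_1 \supseteq \cdots$ with $\bigcap_j G_j = 1$ and with $\lvert G : G_j \rvert$ a power of $p$ times a fixed constant for $j$ large. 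The key point is that $\zeta_{G/G_j}(s)$ is the ``truncation'' of $\zeta_G(s)$ to characters trivial on $G_j$, and I would seek a formula, parallel to~\eqref{equ:formula}, expressing $\zeta_{G/G_j}(s)$ in terms of the same $R_i$ — essentially $R_i(p^{-s})$ replaced by a polynomial obtained by cutting off its power-series expansion in $t = p^{-s}$ at the level determined by $j$. Evaluating such a truncated identity at $s=-2$ (equivalently $t=p^2$) and using $\zeta_{G/G_j}(-2) = \lvert G/G_j \rvert$, I would compare the growth in $j$ of the two sides.

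The heart of the argument, and the step I expect to be the main obstacle, is the passage from the finite-level identities back to the value of the rational function at $t = p^2 > 1$, which lies outside the disc of convergence of the relevant power series. Concretely, if $R_i(t) = \sum_{n \ge 0} a_{i,n} t^n$ near $t=0$, then the finite-level data controls the partial sums $\sum_{n \le N_j} a_{i,n} (p^2)^n$, and one knows these partial sums grow like $\lvert G/G_j\rvert \approx c\, p^{2N_j}$ (up to the bookkeeping with the $m_i$). I would need to show that the rational function's value at $p^2$, which is the ``regularized'' limit obtained by meromorphic continuation, must then be $0$: the intuition is that a rational function in $t$ whose Taylor coefficients at $0$ have partial sums (weighted by $(p^2)^n$) growing exactly like the leading term $a_{i,N}(p^2)^N$ cannot absorb a nonzero constant term at $t=p^2$ without violating the FAb/polynomial-growth constraint on the $r_n(G)$. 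Making this precise will require pinning down the exponent $N_j$ via the abelianizations $G_j/[G_j,G_j]$ (finite, by FAb) and the relation $\lvert G : G_j\rvert = \lvert G:G_{j+1}\rvert \cdot \lvert G_j : G_{j+1}\rvert$, and then a careful Abel-summation or partial-fraction analysis of each $R_i$ at its (possible) poles on $\lvert t \rvert = p^2$ and beyond. Once that comparison forces $\sum_i m_i^2 R_i(p^2) = 0$, the theorem follows, and Corollary~\ref{cor:main} is immediate from~\cite[Theorem~1.1]{JZ06}, which supplies the hypothesis $\mathrm{r.r.z.f.}_{(p)}$ for $p \ge 3$.
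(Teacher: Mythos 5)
Your overall strategy---comparing the ``finite-level'' values $\zeta_{G/G_j}(-2)=\lvert G/G_j\rvert$ with the value of the rational function at $t=p^{-s}=p^{2}$ by an Archimedean analysis of partial sums---has two genuine gaps, and the second is fatal to the plan as described. First, the truncation identity you need does not exist: the irreducible characters of $G$ that are trivial on $G_j$ are not the characters of degree below some cutoff, so $\zeta_{G/G_j}(s)$ is not obtained from \eqref{equ:formula} by cutting off the power-series expansions of the $R_i$ in $t=p^{-s}$ at a level determined by $j$. (The subgroup one should use is tailored precisely to this issue: take $N_j$ to be the intersection of the kernels of the finitely many $\chi\in\Irr(G)$ with $p^{j}\nmid\chi(1)$; then every character \emph{not} trivial on $N_j$ has degree divisible by $p^{j}$.) Second, and more seriously, the step you yourself flag as the main obstacle does not go through in the Archimedean setting: at $s=-2$ the Dirichlet series diverges, and knowing that truncated sums at $t=p^{2}$ grow like a leading term $c\,p^{2N_j}$ says nothing about the analytically continued value, because that value enters only as an additive constant which is exponentially smaller than the main terms. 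To detect it you would need the partial sums up to an $o(1)$ error after subtracting \emph{all} exponentially growing contributions of the rational functions, and neither $\lvert G/G_j\rvert$, FAb, nor polynomial representation growth supplies such control. Likewise, your preliminary step of excluding a pole at $s=-2$ does not follow readily from the pole-spectrum results of \cite{AKOV13}.

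The missing idea is to work $p$-adically. Rationality \eqref{equ:formula} forces every character degree to be of the form $m_i p^{r}$, so for each $j$ only finitely many $\chi\in\Irr(G)$ have $p^{j}\nmid\chi(1)$; hence $\sum_{\chi\in\Irr(G)}\chi(1)^{2}$ converges in $\Z_p$, and matching Dirichlet coefficients on both sides of \eqref{equ:formula} identifies this $p$-adic limit with $\sum_i m_i^2 R_i(p^2)=\zeta_G(-2)$ (this also disposes of the pole question). Splitting the sum according to whether $N_j\subseteq\ker\chi$ gives $\zeta_G(-2)=\lvert G:N_j\rvert+p^{2j}a_j$ with $a_j\in\Z_p$, and since $G$ is infinite and virtually pro-$p$ both terms tend to $0$ in $\Z_p$ as $j\to\infty$, whence $\zeta_G(-2)=0$; Corollary~\ref{cor:main} then follows from \cite[Theorem~1.1]{JZ06} exactly as you say. (There is also a second, Clifford-theoretic route: $\zeta_G(-2)=\lvert G:H\rvert\,\zeta_H(-2)$ for open $H\leq G$, combined with $\zeta_{N^{p}}(s)=\lvert N:N^{p}\rvert\,\zeta_N(s)$ for uniform $N$, yields $\lvert N:N^{p}\rvert\,\zeta_N(-2)=\lvert N:N^{p}\rvert^{-1}\zeta_N(-2)$ and hence $\zeta_N(-2)=0$.) Your instinct that the divergence $\lvert G/G_j\rvert\to\infty$ must be reconciled with the value $0$ is exactly right, but the reconciliation happens in $\Z_p$, where $\lvert G:N_j\rvert\to 0$, not through Archimedean asymptotics.
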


\begin{rem} \label{rem:Wedderburn} Wedderburn's structure theorem for
  semisimple algebras implies that $\zeta_G(-2) = \sum_{\chi \in \Irr
    (G)} \chi(1)^2 = |G|$ for every finite group~$G$.  For an infinite
  profinite group $G$ one can evaluate $\zeta_G(s)$ at $s=-2$ only if
  the function defined by the Dirichlet series has a suitable
  continuation.
\end{rem}

\begin{rem}
  The representation functions of compact open subgroups of
  semi\-simple $p$-adic Lie groups, such as $\SL_n(\Z_p)$, occur
  naturally as factors in Euler products for the representation zeta
  functions of arithmetic lattices in semisimple groups, such $\Gamma
  = \SL_n(\Z) \subseteq \SL_n(\mathbb{R})$; see
  \cite[Proposition~1.3]{LaLu08}.  However, since the Euler product
  formula is not valid for $s=-2$, one cannot use
  Corollary~\ref{cor:main} directly to investigate potential
  properties of $\zeta_\Gamma(s)$ at $s=-2$.  For instance, the
  inverse of the Riemann zeta function $\zeta(s)^{-1} =
  \sum_{n=1}^\infty \mu(n) n^{-s}$ satisfies $\zeta(s)^{-1} = \prod_p
  (1-p^{-s})$ and $\zeta(0)=-2$.
\end{rem}

In the next section we prove Theorem~\ref{thm:main} and its corollary,
by considering the $p$-adic limit of $\zeta_G(s)$ at $s=-2$.  We also
offer an alternative proof of Corollary~\ref{cor:main}, which is
closer to the character theoretic set-up in~\cite{JZ06}.  In the last
section we provide further comments and highlight some open problems.

In conclusion, we remark that related questions regarding zeros and
special values of Witten $L$-functions associated to real Lie groups,
in particular to the groups $\mathrm{SU}(2)$ and $\mathrm{SU}(3)$,
have been considered by Kurokawa and Ochiai~\cite{KuOc13} and also by
Min~\cite{Mi13}.


\section{The proofs}

\begin{proof}[Proof of Theorem~\ref{thm:main}]
  Let $m_1, \ldots, m_k \in \N$ and $R_1, \ldots, R_k \in \Q(X)$ such
  that the Dirichlet series $\zeta_G(s) = \sum_{n=1}^\infty r_n(G)
  n^{-s} = \sum_{\chi \in \Irr(G)} \chi(1)^{-s}$
  satisfies~\eqref{equ:formula}.  Then the degrees of the irreducible
  characters of $G$ are of the form $m_i p^r$ with $i \in \{1, \ldots,
  k\}$ and $r \geq 0$.  In particular, for every positive integer $j$
  there are at most finitely many characters $\chi \in \Irr(G)$ with
  $p^j \nmid \chi(1)$.  Consequently, the series $\zeta_G(s)$
  converges, with respect to the $p$-adic topology, at every negative
  integer $-e \in -\N$ to an element in the ring $\Z_p$ of $p$-adic
  integers: we obtain a function
  \[
  \zeta_G^\text{$p$-adic} \colon -\N \rightarrow \Z_p, \quad -e
  \mapsto \sum_{n=1}^\infty r_n(G) n^e = \sum_{\chi \in \Irr(G)}
  \chi(1)^e.
  \]
  For the last equality recall that in the $p$-adic topology every
  converging series converges unconditionally so that its summands can
  be re-arranged freely.

  Equation~\eqref{equ:formula} reflects more than the equality of two
  complex functions: by expansion of the right hand side we obtain a
  Dirichlet series whose coefficients must agree with the defining
  coefficients $r_n(G)$ of the zeta function on the left hand side.
  This implies that for every negative integer $-e$,
  \begin{equation} \label{equ:p-adic-zeta} \zeta_G^\text{$p$-adic}(-e)
    = \sum_{i=1}^k m_i^e R_i(p^e) = \zeta_G(-e).
  \end{equation}
  Consequently, it suffices to prove that $\zeta_G^\text{$p$-adic}(-2)
  = 0$.

  Fix a positive integer~$j$.  As seen above, there are only finitely
  many characters $\chi \in \Irr(G)$ such that $p^j \nmid \chi(1)$.
  We define
  \begin{equation*}
    N_j = \bigcap_{\chi \in \Irr (G),\; p^j \nmid \chi (1)} \ker \chi,
  \end{equation*}
  where each $\ker \chi$ coincides with the kernel of a
  representation affording~$\chi$.  Then $N_j$ is an open normal
  subgroup of~$G$, and
  \begin{equation*}
    \zeta_G^\text{$p$-adic}(-2) = \sum_{\chi \in \Irr(G)} \chi(1)^2 =
    \sum_{\substack{\chi \in \Irr(G) \text{ with} \\  N_j \subseteq
        \ker \chi}}\chi(1)^2 + \sum_{\substack{\chi \in \Irr(G) \text{
          with} \\ N_j \not \subseteq \ker \chi}} \chi (1)^2.
  \end{equation*}
  The first sum is equal to the order of the finite group $G/N_j$,
  while all the terms in the second sum are divisible by~$p^{2j}$.
  Thus
  \begin{equation} \label{equ:take-limit}
    \zeta_G^\text{$p$-adic}(-2) = \lvert G : N_j \rvert + p^{2j} a_j,
  \end{equation} 
  for some $a_j \in \Z_p$. 

  Since $G$ is infinite and virtually a pro-$p$ group, $\lvert G : N_j
  \rvert + p^{2j} a_j \to 0$ in the $p$-adic topology as $j \to
  \infty$.  Thus \eqref{equ:take-limit} yields
  $\zeta_G^\text{$p$-adic}(-2) = 0$.
\end{proof}

Next we give an alternative proof of Corollary~\ref{cor:main}, which
is closer to the set-up in~\cite{JZ06} and does not rely on $p$-adic
limits.

\begin{pro} \label{pro:1} Suppose that $p \geq 3$ and let $N$ be a FAb
  uniformly powerful pro-$p$ group.  Then for every $m \geq 0$,
  \[
  \zeta_{N^{p^m}}(s) = \lvert N : N^{p^m} \rvert \, \zeta_N(s).
  \]
\end{pro}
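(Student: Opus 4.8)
The plan is to exploit the explicit description of the irreducible characters of a FAb uniformly powerful pro-$p$ group in terms of coadjoint orbits on the associated $\mathbb{Z}_p$-Lie lattice, as developed in~\cite{JZ06}. Write $N$ as a uniformly powerful pro-$p$ group with associated Lie lattice $L = \log N$; then $N^{p^m}$ is again uniformly powerful with lattice $L^{p^m} = p^m L$. Since $p \geq 3$, the Kirillov orbit method applies: the irreducible characters of $N$ are in bijection with the coadjoint orbits of $N$ on $\mathrm{Irr}(L) = \mathrm{Hom}(L,\C^\times)$, and the degree of the character attached to an orbit $\Omega$ is $\lvert \Omega\rvert^{1/2}$. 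The same description holds for $N^{p^m}$ with $L$ replaced by $p^m L$.

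First I would set up the dual lattices: $\widehat{L} = \mathrm{Hom}(L, \Q_p/\Z_p)$ and $\widehat{p^m L} = \mathrm{Hom}(p^m L, \Q_p/\Z_p)$, with $N$ acting on $\widehat L$ via the coadjoint action and $N^{p^m}$ acting on $\widehat{p^m L}$. The key structural observation is that the restriction map $\mathrm{Hom}(L,\Q_p/\Z_p)\to\mathrm{Hom}(p^m L,\Q_p/\Z_p)$, $\theta \mapsto \theta|_{p^m L}$, is surjective with kernel of size $\lvert L : p^m L\rvert = \lvert N : N^{p^m}\rvert$; equivalently, multiplication by $p^{-m}$ identifies $\widehat{p^m L}$ with $\widehat L$. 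Second, I would check that this identification is equivariant in the appropriate sense: the coadjoint action of $N^{p^m}$ on $\widehat{p^m L}$, transported to $\widehat L$, agrees with the coadjoint action of $N^{p^m}$ viewed inside $N$ (because the adjoint action is by conjugation and $N^{p^m}$ is a subgroup of $N$). Hence each $N$-coadjoint orbit $\Omega$ on $\widehat L$ of size $q$ breaks up, under the finer action of $N^{p^m}$, into exactly $\lvert N : N^{p^m}\rvert / (\text{stabilizer index correction})$ orbits — and here the point I expect to be the main obstacle is controlling these orbit sizes: I need that an $N$-orbit of size $q$ splits into precisely $\lvert N : N^{p^m}\rvert$ orbits each of size $q$ under $N^{p^m}$, or some bookkeeping that makes the degree-weighted count come out right.

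To handle that obstacle I would argue that for $p\geq 3$ and $N$ uniformly powerful, the relevant stabilizers are themselves uniform and the coadjoint orbit of $\theta$ under a subgroup $H\leq N$ has size $\lvert H : \mathrm{Stab}_H(\theta)\rvert$, a power of $p$; moreover $\mathrm{Stab}_N(\theta)$ contains $N^{p^m}$-translates in a controlled way. Concretely, one shows $\mathrm{Stab}_{N^{p^m}}(\theta) = \mathrm{Stab}_N(\theta) \cap N^{p^m}$ has index in $N^{p^m}$ equal to the index of $\mathrm{Stab}_N(\theta)$ in $N$ times $\lvert N : N^{p^m} \mathrm{Stab}_N(\theta)\rvert^{-1}\cdot\lvert N:N^{p^m}\rvert$; using that $N^{p^m}$ is normal and that $\mathrm{Stab}_N(\theta)$ is open, a counting argument gives that the total number of $N^{p^m}$-orbits inside a single $N$-orbit, each weighted by (size)${}^{-s/2}$ raised appropriately, sums to $\lvert N : N^{p^m}\rvert$ times the contribution of that $N$-orbit to $\zeta_N(s)$.

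Finally I would assemble the computation:
\begin{equation*}
  \zeta_{N^{p^m}}(s) = \sum_{\Omega' \in (\widehat{p^m L})/N^{p^m}} \lvert \Omega' \rvert^{-s/2}
  = \sum_{\Omega \in \widehat{L}/N} \Big( \sum_{\substack{\Omega' \subseteq \Omega \\ \Omega' \in \cdot/N^{p^m}}} \lvert \Omega' \rvert^{-s/2} \Big)
  = \lvert N : N^{p^m} \rvert \sum_{\Omega \in \widehat{L}/N} \lvert \Omega \rvert^{-s/2}
  = \lvert N : N^{p^m} \rvert \, \zeta_N(s),
\end{equation*}
where the middle equality uses the identification $\widehat{p^m L}\cong\widehat L$ and the orbit-splitting count, and the third equality is exactly the index calculation of the previous paragraph. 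The case $m = 0$ is trivial, and an induction on $m$ (reducing the general case to $m=1$ applied to the uniform group $N^{p^{m-1}}$) streamlines the stabilizer bookkeeping if the direct count proves awkward.
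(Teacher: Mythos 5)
The orbit-method framework you set up is the right one (it is what underlies the formula of \cite[Corollary~2.13]{JZ06} that the paper actually cites), and your identification of $\widehat{p^m L}=\mathrm{Hom}(p^mL,\Q_p/\Z_p)$ with $\widehat{L}$ via pullback along $x\mapsto p^mx$ is indeed $N^{p^m}$-equivariant. The genuine gap is the counting step you yourself flag as the main obstacle: it cannot be repaired in the form you propose, because the identity $\zeta_{N^{p^m}}(s)=\lvert N:N^{p^m}\rvert\,\zeta_N(s)$ is simply false orbit-by-orbit. Since $N^{p^m}$-orbits refine $N$-orbits under your identification, a sub-orbit can never have the same size $q$ as the ambient $N$-orbit unless it equals it, so ``$\lvert N:N^{p^m}\rvert$ orbits each of size $q$'' is self-contradictory. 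More precisely, because $N^{p^m}$ is normal in $N$, all $N^{p^m}$-orbits inside a fixed $N$-orbit $\Omega=N\cdot\omega$ with $S=\mathrm{Stab}_N(\omega)$ have the same size $\lvert N^{p^m}S:S\rvert$ and there are $\lvert N:N^{p^m}S\rvert$ of them; their weighted contribution is $\lvert N:N^{p^m}S\rvert\cdot\lvert N^{p^m}S:S\rvert^{-s/2}$, which equals $\lvert N:N^{p^m}\rvert\,\lvert\Omega\rvert^{-s/2}$ for all $s$ only if $S\subseteq N^{p^m}$ and $N^{p^m}S=N$, impossible for $m\geq 1$. The trivial character makes this concrete: the $N$-orbit $\{0\}$ contains exactly one $N^{p^m}$-orbit, contributing $1$, not $\lvert N:N^{p^m}\rvert$. (Relatedly, singleton $N^{p^m}$-orbits, i.e.\ degree-one characters of $N^{p^m}$, sit inside large $N$-orbits, which is exactly why the deficit from $\{0\}$ is compensated elsewhere.) So the third equality in your final display, which you justify by the ``index calculation of the previous paragraph'', does not hold, and no term-by-term bookkeeping over $N$-orbits can produce the factor $\lvert N:N^{p^m}\rvert$.

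The correct mechanism is global rather than orbit-by-orbit, and this is what the paper leans on by citing the analysis in \cite[Section~3]{AKOV13} of \cite[Corollary~2.13]{JZ06}: one writes $\zeta_N(s)$ as a sum over levels $n$ of contributions from the finite duals of $L/p^nL$, computed from the elementary divisors of the commutator matrix of $L$. Passing from $L$ to $p^mL$ rescales the bracket, since $[p^mx,p^my]=p^{2m}[x,y]$, so the level filtration and the orbit sizes shift in a uniform way, and after re-summing the resulting series the overall factor $\lvert L:p^mL\rvert=\lvert N:N^{p^m}\rvert$ appears; the identity is one of rational functions in $p^{-s}$, not an identity between the contributions of matching orbits. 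If you want a self-contained proof along your lines, you would need to carry out this level-by-level comparison (tracking how the radical of $\omega\circ p^m[\cdot,\cdot]$ relates to that of $\omega$ across all $\omega$ of a given level), rather than splitting individual coadjoint orbits; your proposed induction on $m$ does not avoid this issue.
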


\begin{proof}
  This is a consequence of the analysis in~\cite[Section~3]{AKOV13}
  of a formula given in~\cite[Corollary~2.13]{JZ06}.
\end{proof}

\begin{pro} \label{pro:2} Suppose that $p \geq 3$ and let $G$ be a FAb
  compact $p$-adic Lie group.  Let $H$ be an open subgroup of $G$.
  Then
  \[
  \zeta_G(-2) = \lvert G : H \lvert \, \zeta_H(-2).
  \]
\end{pro}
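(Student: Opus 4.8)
The plan is to reduce Proposition~\ref{pro:2} to Proposition~\ref{pro:1} by passing to a common uniformly powerful normal subgroup and exploiting that the quantity $\zeta_G(-2)$ can be computed ``level by level''. Recall from the proof of Theorem~\ref{thm:main} that for any FAb profinite group $G$ with $\mathrm{r.r.z.f.}_{(p)}$ and any open normal subgroup $M \trianglelefteq G$, splitting $\Irr(G)$ according to whether $M \subseteq \ker\chi$ gives
\begin{equation*}
  \zeta_G(-2) = \lvert G : M \rvert + \sum_{\substack{\chi \in \Irr(G) \\ M \not\subseteq \ker\chi}} \chi(1)^2,
\end{equation*}
and similarly for $H$ with the \emph{same} $M$, provided $M \subseteq H$. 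So it suffices to control the ``non-trivial on $M$'' part of each sum.

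First I would choose $M$ to be an open normal subgroup of $G$ that is uniformly powerful and contained in $H$; such an $M$ exists since $G$ is a compact $p$-adic Lie group (it is virtually uniform, and one can intersect a uniform open subgroup with finitely many conjugates and with $H$, then pass down to a uniform open normal subgroup). The characters of $G$ that are non-trivial on $M$ are partitioned into $\Gal$-type orbits lying over the characters of $M$; more precisely, by Clifford theory each such $\chi$ restricts to $M$ as a sum of $G$-conjugates of some $\theta \in \Irr(M)$ with $\theta \neq 1_M$. The key input is Proposition~\ref{pro:1}: applied with $N = M$ viewed inside larger uniform groups, it says that replacing a uniform group by its $p^m$-th power subgroup scales the \emph{entire} zeta function by the index. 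I would leverage this to show that the contribution $\sum_{M \not\subseteq \ker\chi} \chi(1)^2$ to $\zeta_G(-2)$ equals $\lvert G : H \rvert$ times the analogous contribution to $\zeta_H(-2)$; combined with $\lvert G : M \rvert = \lvert G : H\rvert \cdot \lvert H : M\rvert$ for the trivial-on-$M$ parts, this yields the claimed identity.

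Concretely, the cleanest route is probably to first handle the case where $H = N$ is itself FAb uniformly powerful and normal in $G$, and then bootstrap: write $\zeta_G(-2)$ as a finite sum over $\chi \in \Irr(G)$ lying over characters $\theta$ of $N$, grouping by $N$-level, i.e.\ by the smallest $m$ with $\theta$ trivial on $N^{p^{m}}$ but not on $N^{p^{m+1}}$; by Proposition~\ref{pro:1} the $\theta$-data at each level for $N$ matches that for $N^{p^\ell}$ after the index rescaling, and Clifford theory relates the degrees $\chi(1)$ to $\theta(1)$ and the index of the inertia group. The general open subgroup $H$ is then reached by applying the normal-subgroup case twice, to the chain $N \trianglelefteq$ (core of $H$ in $G$) and $N \trianglelefteq H$, together with multiplicativity of indices in towers.

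The main obstacle I anticipate is the Clifford-theoretic bookkeeping: one must verify that the degrees and multiplicities of the irreducible characters of $G$ (resp.\ $H$) lying above a fixed nontrivial $\theta \in \Irr(N)$ combine so that the sum of $\chi(1)^2$ over that fibre is exactly $\lvert G : H\rvert$ times the corresponding sum for $H$. This is where one needs the precise structure of the formula in \cite[Corollary~2.13]{JZ06} as analysed in \cite[Section~3]{AKOV13} --- in particular that the relevant ``Kirillov orbit'' parametrization of $\Irr(N)$ is compatible with passing between $N$ and $N^{p^m}$ and with the $G$-action. Once that compatibility is in hand, the identity $\zeta_G(-2) = \lvert G : H\rvert\, \zeta_H(-2)$ drops out by summing over all fibres and adding back the $\lvert G : N\rvert$-type contribution from characters trivial on $N$.
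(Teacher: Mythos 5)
Your overall strategy --- interpreting $\zeta_G(-2)$ $p$-adically as $\sum_{\chi\in\Irr(G)}\chi(1)^2$ (legitimate here: $G$, $H$ and any open normal subgroup $M$ are again FAb compact $p$-adic analytic groups, hence have $\mathrm{r.r.z.f.}_{(p)}$ for $p\geq 3$ by \cite{JZ06}, so \eqref{equ:p-adic-zeta} applies, and unconditional $p$-adic convergence permits regrouping) followed by a fibre-by-fibre comparison over $\Irr(M)$ --- is viable, and it is genuinely different from the paper's proof, which never leaves the rational-function framework of \cite{JZ06} and uses neither Proposition~\ref{pro:1} nor $p$-adic limits. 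But as written your argument has a real gap exactly at the step you yourself flag as ``Clifford-theoretic bookkeeping'', and the tool you propose for it cannot close it. What is needed is the identity, for $\theta\in\Irr(M)$ with stabilizer $K=\St_G(\theta)$,
\[
\sum_{\psi\in\Irr(K\mid\theta)}\left(\frac{\psi(1)}{\theta(1)}\right)^{2}=\lvert K:M\rvert,
\]
which, via $\chi=\ind_K^G\psi$ and $\chi(1)=\lvert G:K\rvert\,\psi(1)$, shows that the $\chi\in\Irr(G)$ lying above the $G$-orbit of $\theta$ contribute $\lvert G:K\rvert\,\lvert G:M\rvert\,\theta(1)^2$; summing over orbits then gives $\zeta_G(-2)=\lvert G:M\rvert\,\zeta_M(-2)$, likewise for $H$, and the proposition follows. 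This identity is a two-line consequence of Frobenius reciprocity ($\ind_M^K\theta=\sum_\psi e_\psi\psi$ with $e_\psi=\psi(1)/\theta(1)$; compare degrees), and it is precisely the paper's equation \eqref{equ:f(-2)}. You never state or prove it.

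Instead you propose to extract the needed bookkeeping from Proposition~\ref{pro:1} and from compatibility of a Kirillov-orbit parametrization under $N\mapsto N^{p^m}$. Proposition~\ref{pro:1} cannot supply this: it is a single global identity $\zeta_{N^{p^m}}(s)=\lvert N:N^{p^m}\rvert\,\zeta_N(s)$ relating a uniform group to its power subgroups, and it carries no information about inertia groups, multiplicities, or how squared degrees of characters of $G$ (or $H$) distribute over the fibres of restriction to $M$ --- which is the entire content of the step; your ``level-by-level'' grouping by the least $m$ with $\theta$ trivial on $N^{p^m}$ never interacts with it in any stated way. Note also that once the displayed identity is in hand, no uniformity of $M$, no Kirillov-orbit input and no detour through the core of $H$ are needed: any open normal subgroup $M$ of $G$ contained in $H$ works directly. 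So either insert the Frobenius-reciprocity computation, after which your $p$-adic argument closes and gives an alternative, arguably more elementary proof, or argue as the paper does: it isolates the same identity \eqref{equ:f(-2)} and then works purely with the decomposition into character triples $(K,N,\theta)$ and the rational functions $f_v$, $g_v$ from \cite{JZ06}, evaluating everything at $s=-2$ as rational functions.
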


\begin{proof}
  Choose an open normal subgroup $N$ of $G$ which is $2$-uniform (in
  the sense of \cite[Section~2]{JZ06}) and contained in~$H$.  We show
  that
  \[
  \zeta_G(-2) = \lvert G : N \lvert \, \zeta_N(-2).
  \]
  The same reasoning yields $\zeta_H(-2) = \lvert H : N \lvert
  \zeta_N(-2)$, and combining the two equations proves the
  proposition.

  We adapt the set-up in~\cite[Sections~5 and~6]{JZ06}.  As in the
  proof of \cite[Theorem~1.1]{JZ06}, we decompose the representation
  zeta function of $G$ as
  \[
  \zeta_G(s) = \sum_{N \leq K \leq G} \sum_{\substack{\theta \in
      \Irr(N) \\ \text{with } \St_G(\theta)=K}} \lvert G : K
  \rvert^{-1-s} f_{(K,N,\theta)}(s) \cdot\theta(s)^{-1},
  \]
  where for each character triple $(K,N,\theta)$ one defines
  \[
  f_{(K,N,\theta)}(s) = \sum_{\chi \in \Irr(K \vert \theta)}
  \left( \frac{\chi(1)}{\theta(1)} \right)^{-s}
  \]
  summing over all $\chi \in \Irr(K)$ such that $\theta$ is a
  component of~$\red_N^G(\chi)$.  We observe that for each character
  triple $(K,N,\theta)$,
  \begin{equation} \label{equ:f(-2)} f_{(K,N,\theta)}(-2) = \sum_{\chi
      \in \Irr(K \vert \theta)} \left( \frac{\chi(1)}{\theta(1)}
    \right)^2 = \frac{\red_N^K(\ind_N^K(\theta))(1)}{\theta(1)} =
    \lvert K : N \rvert.
  \end{equation}

  It is proved in \cite{JZ06} that for each group $K$ with $N \leq K
  \leq G$ the set $\Irr(N)_K = \{ \theta \in \Irr(N) \mid
  \St_G(\theta) = K \}$ can be partitioned into finitely many subsets
  $\Irr(N)_{K,v}$, labelled by $v \in V_K$, such that
  \begin{enumerate}
  \item[(i)] for each $v \in V_K$ and $\theta \in \Irr(N)_{K,v}$,
    \[
    f_{(K,N,\theta)}(s) = f_v(s)
    \]
    depends only on $v$ and
  \item[(ii)] for each $v \in V_K$,
    \[
    g_v(s) = \sum_{\theta \in \Irr(N)_{K,v}} \theta(s)^{-1}
    \]
    is a rational function over $\Q$ in $p^{-s}$.
  \end{enumerate}
  The equations
  \begin{align*}
    \zeta_G(s) & = \sum_{N \leq K \leq G} \; \sum_{v \in V_K} \lvert G
    : K \rvert^{-1-s} f_v(s) g_v(s), \\
    \zeta_N(s) & = \sum_{N \leq K \leq G} \; \sum_{v \in V_K} g_v(s)
  \end{align*}
  combined with \eqref{equ:f(-2)} give
  \begin{align*}
    \zeta_G(-2) & = \sum_{N \leq K \leq G} \; \sum_{v \in V_K} \lvert
    G : K \rvert \lvert K : N \rvert g_v(-2) \\
    & = \lvert G : N \rvert \zeta_N(-2).
  \end{align*}
\end{proof}

\begin{proof}[Second proof of Corollary~\ref{cor:main}]
  By Proposition~\ref{pro:2} it is enough to prove the result for a
  uniformly powerful pro-$p$ group $N$.  By Propositions~\ref{pro:1}
  and \ref{pro:2} we have
 \[
 \lvert N : N^p \rvert \zeta_N(-2) = \zeta_{N^p}(-2) = \lvert N : N^p
 \rvert^{-1} \zeta_N(-2).
 \]
 Since $\lvert N : N^p \rvert > 1$, this implies $\zeta_N(-2) = 0$.
\end{proof}

\section{Open questions}
We highlight three questions which arise naturally from
Theorem~\ref{thm:main}, Corollary~\ref{cor:main} and their proofs.

In view of \eqref{equ:p-adic-zeta} we record the following problem.

\begin{que}
  Let $G$ be a FAb compact $p$-adic analytic group.  What are the
  values of $\zeta_G(s)$ at other negative integers $s = -e$ and is
  there a suitable interpretation of these?
\end{que}

Of course, we would like to extend Corollary~\ref{cor:main} to the
prime~$p=2$.  More generally, one can ask the following.

\begin{que}
  Let $G$ be a FAb profinite group and and suppose that $\zeta_G(s)$
  converges in some right half-plane $\{ s \in \C \mid \textrm{Re}(s)
  > \alpha \}$.  Suppose further that $\zeta_G(s)$ has a meromorphic
  continuation so that $\zeta_G(-2)$ is defined.  Is it true that
  $\zeta_G(-2) = 0$?
\end{que}

For instance it would be natural to investigate this question for
compact analytic groups over compact discrete valuation rings of
positive characteristic, e.g., over~$\F_p[\![t]\!]$.  The
representation zeta functions of such groups are still rather poorly
understood.  In particular, no analogue of Proposition~\ref{pro:1} is
known.  However, a direct computation in~\cite{JZ06} shows that, for
$p \geq 3$, the group $\SL_2(\F_p[\![t]\!])$ has the same
representation zeta function as the $p$-adic analytic group
$\SL_2(\Z_p)$.

The last question is inspired by Brauer's Problem~$1$, which asks:
what are the possible degree patterns for irreducible characters of
finite groups; see~\cite{Br63,Hu91,Mo07}.  Given a profinite group $G$
with $\mathrm{r.r.z.f.}_{(p)}$, the completed group algebra $\C[\![G]\!] =
\varprojlim_{N \trianglelefteq G} \C[G/N]$, formed with respect to the
directed set of normal open subgroups of $G$, determines the
representation zeta function of~$G$ and, conversely, $\zeta_G(s)$
determines $\C[\![G]\!]$.  Furthermore, if $G$ is a pro-$p$ group,
then $\zeta_G(s)$ is a rational function over $\Q$ in $p^{-s}$.  The
following can be regarded as an extension of Brauer's Problem~$1$ to
FAb pro-$p$ groups.

\begin{que}
  Which rational functions $R(p^{-s})$ over $\Q$ in $p^{-s}$ are
  representation zeta functions of infinite FAb pro-$p$ groups with
  $\mathrm{r.r.z.f.}_{(p)}$?
\end{que}

Theorem~\ref{thm:main} provides a first necessary criterion:
$R(p^2)=0$.


\end{document}